\documentclass{interact}
\usepackage{bm}
\def\<{\langle}
\def\>{\rangle}
\usepackage{upgreek}

\usepackage{amsmath, amsthm, amscd, amsfonts, amssymb, graphicx, color}
\usepackage[bookmarksnumbered, colorlinks, plainpages]{hyperref}
\hypersetup{colorlinks=true,linkcolor=red, anchorcolor=green, citecolor=red, urlcolor=red, filecolor=magenta, pdftoolbar=true}

\def\1{\mathbf{1}}

\theoremstyle{plain}
\newtheorem{thm}{Theorem}[section]

\newtheorem{lem}[thm]{Lemma}

\newtheorem{rmk}[thm]{Remark}
\theoremstyle{definition}

\theoremstyle{remark}

\numberwithin{equation}{section}

\begin{document}
	\title{ Maximal Algebras of Block Toeplitz Matrices with Entries in the Schur Algebra}
	
	\author{
		\name{Muhammad Ahsan Khan}
		\affil{Department of Mathematics, University of Kotli Azad Jammu $\&$ Kashmir, Kotli 11100, Azad Jammu $\&$ Kashmir, Pakistan, Email ahsankhan388@hotmail.com}
	}
	
	\maketitle
	
	\begin{abstract}
	The classification of maximal algebras of square block Toeplitz matrices is a considerably more difficult problem and has received relatively little attention in the existing literature. In this work, we approach the problem under the assumption that the entries belong to the Schur algebra. Within these settings, we obtain a complete classification of all maximal algebras of such block Toeplitz matrices.
	\end{abstract}
	
	\begin{keywords}
		Block Toeplitz matrices, Maximal algebras, Schur algebra 
	\end{keywords}
	
	\begin{amscode}
		15A27, 15A30, 15B05, 16N40
	\end{amscode}	
	\section{Introduction}	
	A square matrix is called Toeplitz if its entries along the negative sloping diagonals have same value, that is, 
	\[
	\begin{pmatrix}
		t_0& t_{-1}& t_{-2} & \cdots &t_{1-n}\\
		t_1& t_0 & t_{-1} & \cdots &t_{2-n}\\
		t_{2}& t_{1} & t_{0} & \cdots &t_{3-n}\\
		\vdots & \vdots & \vdots &\ddots & \vdots\\
		t_{n-1} & t_{n-2}& t_{n-3} & \cdots &t_{0}
	\end{pmatrix}
	\]
These matrices are one of the most studied classes of structured matrices, with applications in various areas of mathematics and engineering \cite{BottcherSilbermann, GrenanderSzego, Iohvidov}. The multiplicative properties of Toeplitz matrices have been investigated in \cite{Shalom, GuPatton}. In particular, it is known that the product of two Toeplitz matrices is not necessarily Toeplitz; this leads to the natural question of describing the maximal commutative subalgebras consisting entirely of Toeplitz matrices. For scalar Toeplitz matrices, such algebras are completely classified and are known as \emph{generalized circulants} \cite{Shalom}.

 The corresponding general theory of block Toeplitz matrices remains comparatively less explored in the existing literature (see \cite{BottcherSilbermann,GohbergGoldbergKaashoek}). One of the principal reasons for this is the additional structural difficulties that arise in the block setting, which are not present in the scalar case. The presence of non scalar entries significantly complicates the algebraic structure of such matrices, thereby making their analysis substantially more challenging. Despite these difficulties, the study of block Toeplitz matrices is of considerable theoretical interest. Moreover, beyond its intrinsic mathematical significance, this area also plays an important role in applications, particularly in multivariate control theory and related areas of systems analysis \cite{Wen, Wex, Khan2}. The multiplicative properties of these matrices have been discussed in \cite{Khan, KhanTimotin, Khan1, Khanameur, Khan2} The problem of determining all maximal algebras generated by such matrices was posed as an open question in \cite{Shalom}. However, the problem appears to be rather challenging, and only limited progress has been achieved since then. A partial development in this direction is presented in \cite{Khan}, where a particular class of maximal algebras is characterized. In the present work, we continue this line of investigation by deriving certain general structural properties of these algebras and by obtaining classifications in specific cases.

 In the present paper we treat a natural class: we assume that the entries belong to a fixed maximal commutative subalgebra of matrices, namely a \emph{Schur algebra} \(\mathcal{O}_{\sigma,\tau}\). These algebras, introduced in \cite{BrownCall}, are maximal commutative subalgebras of complex matrices of size $d^2$ that have a particularly simple block structure. Using the general settings developed in \cite{Khan}, we prove that every maximal commutative subalgebra of block Toeplitz matrices with entries in \(\mathcal{O}_{\sigma,\tau}\) is either of the form \(\mathcal{F}_{A,B}\) (with at least one of \(A,B\) invertible) or the special algebra \(\mathcal{B}\otimes \mathcal{O}_{\sigma, \tau}\) consisting of matrices whose off diagonal blocks are nilpotent. The two families are disjoint and exhaust all possibilities.

The paper is organized as follows. Section 2 contains the necessary preliminaries on scalar Toeplitz matrices, block Toeplitz matrices, and Schur algebras. In Section 3 we recall the construction \(\mathcal{F}_{A,B}\) introduced in \cite{Khan} and prove its basic properties. Section 4 is devoted to the special algebra \(\mathcal{B}\otimes \mathcal{O}_{\sigma, \tau}\) and its description as a \(\mathcal{F}_{A,B}\) algebra for nilpotent generators. Finally, in Section 5 we  prove the main classification theorem.
	\section{Notations and Preliminaries}	

		Let \(\mathbb{C}\) denote the field of complex numbers. For positive integers $\sigma, \tau$, let $M_{\sigma\times \tau}\otimes\mathbb{C}$ denote the space of all $\sigma\times\tau$ complex matrices. If  $\sigma=\tau=d$, then \(M_d\otimes \mathbb{C}\) denotes the algebra of all \(d\times d\) complex matrices. 	A scalar Toeplitz matrix of size \(n\) is of the form
		\[
		T = (t_{p-q})_{p,q=0}^{n-1}, \qquad t_j\in\mathbb{C}.
		\]
		The linear space of all such matrices is denoted \(\mathcal{T}_n\otimes \mathbb{C}\). For a subalgebra \(\mathcal{A}\), its commutant is \(\mathcal{A}' = \{X\in M_d\otimes \mathbb{C} : XB=BX \text{ for all } B\in\mathcal{A}\}\). It is not difficult to show that $\mathcal{A}'$ is also an algebra. In this work, we shall primarily be concerned with block matrices, that is, matrices whose entries are not necessarily scalars but elements of a given algebra, i.e., a block Toeplitz matrix is an \(nd\times nd\) matrix partitioned into \(n\times n\) blocks, each of size \(d\times d\), constant along diagonals:
		\[
		\mathbf{T} = (T_{p-q})_{p,q=0}^{n-1}, \qquad T_j\in M_d \otimes \mathbb{C}.
		\]  
		If $\mathcal{A}$  is a complex algebra, we shall employ the following notation:
		\begin{itemize}
			\item $\mathcal{M}_n\otimes \mathcal{A}$ is the collection of $n\times n$ block matrices whose entries all belong to $\mathcal{A}$ ;
			\item  $\mathcal{T}_n\otimes \mathcal{A}$ is the collection of $ n\times n$ block Toeplitz matrices whose entries all belong to $\mathcal{A}$.
		\end{itemize}
A block matrix $\mathbf{T} = (T_{i,j})_{i,j=0}^{n-1}$
has $2n-1$ diagonals, corresponding to $i-j=k\quad (k=1-n, \cdots, 0, \cdots, n-1)$. A block Toeplitz matrix has all its diagonals constant.
	
	The product of two block Toeplitz matrices is not necessarily block Toeplitz. A necessary and sufficient condition for the product to be block Toeplitz is given by the following Lemma from \cite{Khan, Shalom}.
	
	\begin{lem}
		\label{lem:product}
		Let \(\mathbf{T}=(T_{p-q})\) and \(\mathbf{U}=(U_{p-q})\) be in \(\mathcal{T}_{n}\otimes \mathcal{A} \). Then \(\mathbf{T}\mathbf{U}\) is block Toeplitz if and only if
		\[
		T_p U_{q-n} = T_{p-n} U_q \qquad \text{for all } p,q=1,\dots,n-1.
		\]
	\end{lem}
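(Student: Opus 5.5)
We prove the lemma by a direct computation with the block entries of the product. Write $\mathbf{P}=\mathbf{T}\mathbf{U}$, so that its $(i,j)$ block, for $0\le i,j\le n-1$, is
\[
P_{i,j}=\sum_{k=0}^{n-1} T_{i-k}U_{k-j}.
\]
Here the products $T_{i-k}U_{k-j}$ are taken in $\mathcal{A}$, and we do not assume that $\mathcal{A}$ is commutative. The matrix $\mathbf{P}$ is block Toeplitz exactly when $P_{i+1,j+1}=P_{i,j}$ for all $0\le i,j\le n-2$, since this is the statement that every diagonal of $\mathbf{P}$ is constant. We therefore compute the difference $P_{i+1,j+1}-P_{i,j}$ and compare it with the stated condition.

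In $P_{i+1,j+1}$ we substitute $k=m+1$:
\[
P_{i+1,j+1}=\sum_{m=-1}^{n-2}T_{i-m}U_{m-j}.
\]
The sum defining $P_{i,j}$ runs over $m=0,\dots,n-1$. The terms with $0\le m\le n-2$ appear in both sums and cancel, which leaves
\[
P_{i+1,j+1}-P_{i,j}=T_{i+1}U_{-1-j}-T_{i-n+1}U_{n-1-j}.
\]
Now set $p=i+1$ and $q=n-1-j$. As $i$ and $j$ range over $0,\dots,n-2$, both $p$ and $q$ range over $1,\dots,n-1$, and the correspondence is bijective. Moreover $-1-j=q-n$ and $i-n+1=p-n$. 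The difference therefore becomes
\[
T_pU_{q-n}-T_{p-n}U_q .
\]

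It follows that $\mathbf{P}$ is block Toeplitz if and only if $T_pU_{q-n}=T_{p-n}U_q$ for all $p,q\in\{1,\dots,n-1\}$, which is the claim. The argument is routine. The only step needing care is the reindexing: the boundary terms must come out in the right order of multiplication (left factor from $\mathbf{T}$, right factor from $\mathbf{U}$), because $\mathcal{A}$ need not be commutative, and one must check that $(i,j)\mapsto(p,q)$ is a bijection onto $\{1,\dots,n-1\}^2$. Both points were verified above.
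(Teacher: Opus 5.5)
Your proof is correct: comparing $P_{i+1,j+1}$ with $P_{i,j}$ leaves only the two boundary terms $T_{i+1}U_{-1-j}-T_{i+1-n}U_{n-1-j}$, and reindexing by $p=i+1$, $q=n-1-j$ gives exactly the stated condition. The paper does not prove this lemma but quotes it from the earlier works of Shalom and Khan, where it is established by this same direct computation; your care with the order of the factors is what makes the argument valid for a noncommutative $\mathcal{A}$.
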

	
As mentioned in the introduction, the problem of classifying all maximal algebras of block Toeplitz matrices is considerably difficult to handle. In the present work, we address this problem under the assumption that the entries of the block Toeplitz matrices belong to the Schur algebra, which is defined as follows: 

	Fix positive integers \(\sigma,\tau\) with \(\sigma+\tau=d\) and \(|\sigma-\tau|\le 1\). The \emph{Schur algebra} \(\mathcal{O}_{\sigma,\tau}\) is defined as the set of all matrices of the form
	\[
	\begin{pmatrix} \lambda I_\sigma & X \\ 0 & \lambda I_\tau \end{pmatrix},
	\qquad \lambda\in\mathbb{C},\; X\in M_{\sigma\times\tau} \otimes \mathbb{C}.
	\]
	It is known \cite{BrownCall} that \(\mathcal{O}_{\sigma,\tau}\) is a maximal commutative subalgebra of \(M_d \otimes \mathbb{C}\). Its radical \(\mathcal{R}\) consists of those matrices with \(\lambda=0\); note that \(\mathcal{R}^2=0\) and every element of \(\mathcal{R}\) is nilpotent. The following simple criterion from \cite{Khan} is essential.
	
	\begin{lem}\label{lem:invert}
		An element \(T\in\mathcal{O}_{\sigma,\tau}\) is invertible if and only if its scalar part \(\lambda\neq 0\). Consequently, if \(T\) is not invertible, then its kernel contains the subspace \(\mathbb{C}^\sigma\oplus\{0\}\).
	\end{lem}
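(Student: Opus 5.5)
The plan is to work directly with the block form of an element of \(\mathcal{O}_{\sigma,\tau}\). Write \(T=\begin{pmatrix}\lambda I_\sigma & X\\ 0 & \lambda I_\tau\end{pmatrix}\) and decompose \(T=\lambda I_d+N\), where \(N=\begin{pmatrix}0&X\\0&0\end{pmatrix}\in\mathcal{R}\). The invertibility criterion and the kernel statement should then both follow from elementary block computations, with no appeal to earlier results beyond the definition of \(\mathcal{O}_{\sigma,\tau}\).

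First I handle the invertibility criterion.
\begin{itemize}
\item If \(\lambda\neq0\): since \(N^2=0\), the matrix \(\lambda^{-1}I_d-\lambda^{-2}N\) is a two-sided inverse of \(T\). This inverse again lies in \(\mathcal{O}_{\sigma,\tau}\), being of the required form with scalar part \(\lambda^{-1}\) and off-diagonal block \(-\lambda^{-2}X\).
\item If \(\lambda=0\): then \(T=N\) is nilpotent, because \(N^2=0\). Equivalently, \(T\) is block upper triangular with zero diagonal blocks, so \(\det T=\lambda^d=0\). In either form, \(T\) is not invertible.
\end{itemize}
Together these give invertibility if and only if \(\lambda\neq0\).

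For the kernel statement, suppose \(T\) is not invertible. By the first part \(\lambda=0\), so \(T=\begin{pmatrix}0&X\\0&0\end{pmatrix}\). For any vector \((u,0)^T\) with \(u\in\mathbb{C}^\sigma\), we get \(T(u,0)^T=(0\cdot u+X\cdot0,\,0)^T=0\). Hence \(\mathbb{C}^\sigma\oplus\{0\}\subseteq\ker T\).

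None of the steps is a real obstacle. The only point requiring care is to read the kernel statement correctly. It concerns vectors whose first \(\sigma\) coordinates are arbitrary and whose last \(\tau\) coordinates vanish, which matches the column partition \(d=\sigma+\tau\) of the block matrix. It is these first \(\sigma\) columns, the ones multiplying the top block of the vector, that are zero when \(\lambda=0\).
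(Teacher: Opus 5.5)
Your proof is correct and complete. The paper gives no proof of this lemma; it simply imports it from the author's earlier work. Your decomposition $T=\lambda I_d+N$ with $N^2=0$, together with the explicit inverse $\lambda^{-1}I_d-\lambda^{-2}N$ and the direct kernel computation, is the natural elementary verification.

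One point in your favour: you check that the inverse again lies in $\mathcal{O}_{\sigma,\tau}$. The paper later speaks of blocks being ``invertible in $\mathcal{A}$'' (Theorem~\ref{thm:contain} and Case~1 of the main theorem), so showing invertibility in the subalgebra, not just in $M_d\otimes\mathbb{C}$, is the right thing to establish.
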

From here onward we fix a maximal commutative subalgebra \(\mathcal{A}\subseteq M_d \otimes \mathbb{C}\). For any \(A,B\in\mathcal{A}'\) we define
	\[
	\mathcal{F}_{A,B}^{\mathcal{A}} = \bigl\{ \mathbf{T}=(T_{p-q})\in\mathcal{T}_{n} \otimes \mathcal{A} : A T_j = B T_{j-n} \text{ for } j=1,\dots,n-1 \bigr\}.
	\]
	When \(\mathcal{A}=\mathcal{O}_{\sigma,\tau}\), we simply write \(\mathcal{F}_{A,B}\). The following Lemma from \cite{Khan} is quite elementary.
	
	\begin{lem} \label{lem:kern}
		If \(A,B\in M_d \otimes \mathbb{C}\) satisfy \(Ker A\cap Ker B=\{0\}\), then for any \(d\times d\) matrix \(T\) with \(AT=BT=0\) we have \(T=0\).
	\end{lem}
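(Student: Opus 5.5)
The statement is Lemma~\ref{lem:kern}: if \(\operatorname{Ker}A\cap\operatorname{Ker}B=\{0\}\) and \(AT=BT=0\), then \(T=0\). The plan is to argue column by column, that is, to look at the range of \(T\).

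The first step is to fix an arbitrary vector \(x\in\mathbb{C}^d\) and set \(y=Tx\). From \(AT=0\) we get \(Ay=ATx=0\), so \(y\in\operatorname{Ker}A\). In the same way, \(BT=0\) gives \(By=0\), so \(y\in\operatorname{Ker}B\).

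The second step uses the hypothesis. Since \(y\in\operatorname{Ker}A\cap\operatorname{Ker}B=\{0\}\), we conclude \(y=0\). Because \(x\) was arbitrary, \(Tx=0\) for every \(x\in\mathbb{C}^d\), and therefore \(T=0\).

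Equivalently, and more compactly, the two equations say that \(\operatorname{Ran}T\subseteq\operatorname{Ker}A\cap\operatorname{Ker}B=\{0\}\), which forces \(T=0\). I would write the proof in this one-line form, with the vector argument above as its justification.

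There is no genuine obstacle here; the only thing to check is that the kernel condition is applied to vectors in the range of \(T\), which is exactly where \(AT=BT=0\) places them. None of the earlier lemmas (Lemma~\ref{lem:product}, Lemma~\ref{lem:invert}) are needed. In the intended application to \(\mathcal{F}_{A,B}\), Lemma~\ref{lem:invert} will presumably be what verifies the kernel hypothesis, but that is outside this statement.
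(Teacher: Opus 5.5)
Your argument is correct: $AT=BT=0$ places every vector $Tx$ in $Ker A\cap Ker B=\{0\}$, so $T=0$. The paper gives no proof of its own (it imports the lemma from earlier work as elementary), and your range argument is the standard one-line proof it has in mind.
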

	
	The next Theorem from \cite{Khan} shows that under the kernel condition given in Lemma \ref{lem:kern}, \(\mathcal{F}_{A,B}^{\mathcal{A}}\) is an algebra.
	
	\begin{thm}\label{thm:alg}
		Let \(\mathcal{A}\) be a commutative subalgebra of \(M_d\otimes \mathbb{C}\) and let \(A,B\in\mathcal{A}'\). If \( Ker A\cap Ker B=\{0\}\), then \(\mathcal{F}_{A,B}^{\mathcal{A}}\) is closed under multiplication, hence a commutative algebra.
	\end{thm}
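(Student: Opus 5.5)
We must show that if $\mathbf{T},\mathbf{U}\in\mathcal{F}_{A,B}^{\mathcal{A}}$ then $\mathbf{TU}$ is block Toeplitz and again lies in $\mathcal{F}_{A,B}^{\mathcal{A}}$. Commutativity will then follow from the fact that $\mathcal{A}$ is commutative.

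\emph{Toeplitz condition.} By Lemma \ref{lem:product}, it suffices to show that $D_{p,q}:=T_pU_{q-n}-T_{p-n}U_q=0$ for all $p,q=1,\dots,n-1$. Since $A\in\mathcal{A}'$ commutes with every entry, we get
\[
AD_{p,q}=(AT_p)U_{q-n}-T_{p-n}(AU_q)=BT_{p-n}U_{q-n}-T_{p-n}BU_{q-n}=0,
\]
using $AT_p=BT_{p-n}$, $AU_q=BU_{q-n}$, and $BT_{p-n}=T_{p-n}B$. The analogous computation with $B$ gives
\[
BD_{p,q}=T_p(BU_{q-n})-(BT_{p-n})U_q=T_pAU_q-AT_pU_q=0.
\]
By Lemma \ref{lem:kern}, $D_{p,q}=0$.

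\emph{Entries and constraint.} The product $\mathbf{TU}$ is now block Toeplitz, and its entries are sums of products of elements of the algebra $\mathcal{A}$, so they lie in $\mathcal{A}$. Its diagonals are $W_k=\sum_{j}T_{k-j}U_j$, where $j$ runs over $\max(k,0)-(n-1)\le j\le \min(k,0)+(n-1)$.

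We need $AW_k=BW_{k-n}$ for $k=1,\dots,n-1$. The standard approach is to compare the $(0,0)$-type formulas. Write $W_k$ as the $(k,0)$ entry, $W_k=\sum_{r=0}^{n-1}T_{k-r}U_r$, and write $W_{k-n}$ as the $(0,n-k)$ entry, $W_{k-n}=\sum_{r=0}^{n-1}T_{-r}U_{r-n+k}$. Applying $A$ and $B$ and using the defining relations termwise (splitting the indices where $k-r\ge1$ versus $k-r\le0$) should match terms after an index shift $r\mapsto r-k$. The leftover terms cancel because of the Toeplitz identities $T_pU_{q-n}=T_{p-n}U_q$ already established.

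This index bookkeeping is the main obstacle. To handle it, I would apply both $A$ and $B$ to the difference $E_k:=AW_k-BW_{k-n}$ and show $AE_k=BE_k=0$, then invoke Lemma \ref{lem:kern} again. Multiplying by $A$ or $B$ converts each entry of one index sign into the other, which makes the sums telescope.

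Finally, commutativity: the $(p,q)$ entries of $\mathbf{TU}$ and $\mathbf{UT}$ are both block Toeplitz with diagonals computed from commuting entries. The same relation-moving argument, or the identity $\mathbf{TU}$'s diagonal equals $\mathbf{UT}$'s via the symmetric formula $W_k=\sum T_{k-r}U_r$ in commuting entries, shows that $\mathbf{TU}=\mathbf{UT}$.
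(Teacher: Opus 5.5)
The paper quotes this theorem from earlier work without proof, so your argument is assessed on its own. Your Toeplitz step is correct and complete: $AD_{p,q}=BD_{p,q}=0$ together with Lemma~\ref{lem:kern} gives $D_{p,q}=0$, so $\mathbf{TU}$ is block Toeplitz. The other two steps are not established, however.

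\emph{Closure.} You describe the right computation but never carry it out. You predict ``leftover terms'' cancelled by $D_{p,q}=0$, and then fall back on an unspecified telescoping argument for $E_k$. In fact there are no leftover terms, and no second use of the kernel condition is needed. For $1\le k\le n-1$, use $AT_{k-r}=BT_{k-r-n}$ when $0\le r\le k-1$. When $k\le r\le n-1$, move $A$ past $T_{k-r}\in\mathcal{A}$ and use $AU_r=BU_{r-n}$. This gives
\[
A(\mathbf{TU})_{k,0}=\sum_{r=0}^{k-1}BT_{k-r-n}U_r+\sum_{r=k}^{n-1}BT_{k-r}U_{r-n}.
\]
Substitute $s=r+n-k$ in the first sum and $s=r-k$ in the second. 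They become $\sum_{s=n-k}^{n-1}BT_{-s}U_{s-n+k}$ and $\sum_{s=0}^{n-1-k}BT_{-s}U_{s-n+k}$, which together equal $B(\mathbf{TU})_{0,n-k}=BW_{k-n}$. This computation is the entire closure proof and has to be written out; as it stands, the key constraint is only asserted.

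\emph{Commutativity.} Here the proposal is actually wrong. Commuting entries do not imply $\mathbf{TU}=\mathbf{UT}$ for block Toeplitz matrices, because the ranges in $(\mathbf{TU})_{k,0}=\sum_{s=0}^{n-1}T_{k-s}U_s$ and $(\mathbf{UT})_{k,0}=\sum_{s=k-n+1}^{k}T_{k-s}U_s$ differ. Already for $n=2$ one has $(\mathbf{TU})_{0,0}-(\mathbf{UT})_{0,0}=T_{-1}U_1-T_1U_{-1}$, which is nonzero for general commuting entries. The missing ingredient is exactly the identities from your first step. Since that step is symmetric in $\mathbf{T},\mathbf{U}$, $\mathbf{UT}$ is also Toeplitz, so it suffices to compare first columns and first rows. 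Reindexing gives
\[
(\mathbf{TU})_{k,0}-(\mathbf{UT})_{k,0}=-\sum_{s=k+1}^{n-1}D_{n+k-s,\,s},\qquad (\mathbf{TU})_{0,k}-(\mathbf{UT})_{0,k}=-\sum_{p=1}^{n-1-k}D_{p,\,n-k-p},
\]
and both vanish. So commutativity genuinely depends on the kernel hypothesis (through $D_{p,q}=0$), not merely on commutativity of $\mathcal{A}$.
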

	
	Maximality of such algebras is characterized by the following result from \cite{Khan}.
	
	\begin{thm}
		\label{thm:max}
		Let \(\mathcal{A}\) be a maximal commutative subalgebra of \(M_d \otimes \mathbb{C}\) and let \(A,B\in\mathcal{A}'\). Then \(\mathcal{F}_{A,B}^{\mathcal{A}}\) is a maximal commutative subalgebra of \(\mathcal{T}_{n}\otimes\mathbb{C} \) if and only if \(Ker A\cap Ker B=\{0\}\).
	\end{thm}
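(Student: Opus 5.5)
The plan is to take the two implications separately, and throughout to use that maximality of $\mathcal{A}$ forces $\mathcal{A}'=\mathcal{A}$. Indeed, if $Y$ commutes with $\mathcal{A}$, then the algebra generated by $\mathcal{A}$ and $Y$ is commutative, so $Y\in\mathcal{A}$. Hence $A,B\in\mathcal{A}$, so $A$, $B$ and all block entries commute with one another. I read the statement as maximality inside $\mathcal{T}_n\otimes\mathcal{A}$.

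\emph{Sufficiency.} Assume $\operatorname{Ker}A\cap\operatorname{Ker}B=\{0\}$. By Theorem \ref{thm:alg}, $\mathcal{F}_{A,B}^{\mathcal{A}}$ is a commutative algebra. For maximality, take $\mathbf{U}=(U_{p-q})\in\mathcal{T}_n\otimes\mathcal{A}$ commuting with every element of $\mathcal{F}_{A,B}^{\mathcal{A}}$; the goal is $AU_k=BU_{k-n}$ for $k=1,\dots,n-1$. I would use the test matrices $\mathbf{T}^{(j)}$ whose only nonzero diagonals are $T_j=B$ and $T_{j-n}=A$. They lie in $\mathcal{F}_{A,B}^{\mathcal{A}}$ because the only nontrivial condition is $AB=BA$. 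Writing out $(\mathbf{T}^{(j)}\mathbf{U}-\mathbf{U}\mathbf{T}^{(j)})_{p,q}$ along the corner positions, where the wrap-around between diagonals $j$ and $j-n$ appears, should give exactly the defect terms of Lemma \ref{lem:product}. Setting the commutator to zero then yields $T_jU_{q-n}=T_{j-n}U_q$, that is $BU_{q-n}=AU_q$, for every $q$. This says $\mathbf{U}\in\mathcal{F}_{A,B}^{\mathcal{A}}$, so the algebra is maximal. The index bookkeeping here is routine, and the kernel hypothesis is only needed to know via Theorem \ref{thm:alg} that we have an algebra at all.

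\emph{Necessity.} Assume $0\neq v\in\operatorname{Ker}A\cap\operatorname{Ker}B$. I would show that $\mathcal{F}_{A,B}^{\mathcal{A}}$ is either not closed under multiplication or properly contained in a larger commutative algebra. The key intermediate object is a nonzero $X\in\mathcal{A}$ with $AX=BX=0$. Heuristically, the ideal of $\mathcal{A}$ generated by $A,B$ kills $v$, so it is proper, and one wants its annihilator to be nonzero; for $\mathcal{A}=\mathcal{O}_{\sigma,\tau}$ this is immediate from Lemma \ref{lem:invert}, since both $A,B$ are then in the radical $\mathcal{R}$ and any nonzero element of $\mathcal{R}$ works because $\mathcal{R}^2=0$. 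Given such $X$, every block Toeplitz matrix whose entries are multiples of $X$ (elements of $X\mathcal{A}$) lies in $\mathcal{F}_{A,B}^{\mathcal{A}}$, because both sides of $AT_j=BT_{j-n}$ vanish. So $\mathcal{F}_{A,B}^{\mathcal{A}}$ contains, for instance, $X$ placed on diagonal $1$ alone and $X$ placed on diagonal $1-n$ alone.
\begin{itemize}
\item If $X^2\neq0$, Lemma \ref{lem:product} applied to these two matrices gives $T_1U_{1-n}=X^2\neq 0=T_{1-n}U_1$. So their product is not Toeplitz, and the set is not an algebra.
\item If $X^2=0$, I would instead enlarge: pick a pair $(C,D)$ with trivial common kernel (for instance perturbing $A$ by a scalar multiple of $I$) such that $\mathcal{F}_{A,B}^{\mathcal{A}}\subsetneq\mathcal{F}_{C,D}^{\mathcal{A}}$, or find directly an element outside $\mathcal{F}_{A,B}^{\mathcal{A}}$ commuting with it.
\end{itemize}

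The main obstacle is this necessity direction for a general maximal commutative $\mathcal{A}$. Producing the annihilating $X$ is the first difficulty: I would argue via the socle of the local components of $\mathcal{A}$ and fall back on the explicit Schur case. Handling the $X^2=0$ case cleanly is the second, and I expect it to need the most care.
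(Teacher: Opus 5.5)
The paper does not prove this statement; it quotes it from \cite{Khan}, so your proposal has to stand on its own.

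\textbf{Sufficiency.} This half is sound. The matrix $\mathbf{T}^{(j)}$ (with $T_j=B$, $T_{j-n}=A$, all other blocks zero) lies in $\mathcal{F}^{\mathcal{A}}_{A,B}$. Suppose $\mathbf{U}$ belongs to a commutative algebra $\mathcal{M}\supseteq\mathcal{F}^{\mathcal{A}}_{A,B}$ inside $\mathcal{T}_n\otimes\mathcal{A}$. Then $\mathbf{T}^{(j)}\mathbf{U}\in\mathcal{M}$ is Toeplitz, and Lemma~\ref{lem:product} with $p=j$ gives $BU_{q-n}=AU_q$ directly. This is cleaner than expanding the commutator. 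Writing $\phi(a,b)=T_aU_{b-n}-T_{a-n}U_b$, the diagonal increments of $\mathbf{T}\mathbf{U}-\mathbf{U}\mathbf{T}$ are $\phi(a,b)+\phi(b,a)$, not $\phi(a,b)$. That symmetrized defect suffices here only because $\phi$ is supported on the single row $a=j$.

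\textbf{Necessity.} This is the genuine gap, and it cannot be closed, because the implication is false as stated. The paper's own Theorem~\ref{thm:S} is a counterexample.
\begin{itemize}
\item[(a)] Take $\mathcal{A}=\mathcal{O}_{\sigma,\tau}$ with $\sigma\tau\ge 2$ (say $\sigma=2$, $\tau=1$), and linearly independent $A,B\in\mathcal{R}$. Then $\operatorname{Ker}A\cap\operatorname{Ker}B\supseteq\mathbb{C}^\sigma\oplus\{0\}$.
\item[(b)] Yet $AT_j=\lambda_jA$ and $BT_{j-n}=\lambda_{j-n}B$ force $\mathcal{F}_{A,B}=\mathcal{B}\otimes\mathcal{O}_{\sigma,\tau}$. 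This set is closed under multiplication since $\mathcal{R}^2=0$, and it is maximal commutative.
\end{itemize}

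This is exactly your $X^2=0$ branch, since every annihilator $X$ of $A,B$ lies in $\mathcal{R}$. Neither of your proposed repairs can work there:
\begin{itemize}
\item[(a)] No element outside $\mathcal{F}_{A,B}$ can be adjoined, by the maximality proved in Theorem~\ref{thm:S}.
\item[(b)] No $\mathcal{F}_{C,D}$ with $\operatorname{Ker}C\cap\operatorname{Ker}D=\{0\}$ contains it. By maximality it would have to equal it. But $\mathcal{F}_{C,D}$ contains the element with $T_1=D$, $T_{1-n}=C$, and one of $C,D$ is invertible by Lemma~\ref{lem:invert}, so that element has an invertible off-diagonal block.
\end{itemize}

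Your own sufficiency argument shows why the converse fails. It uses the kernel condition only to know that $\mathcal{F}^{\mathcal{A}}_{A,B}$ is an algebra. Hence every $\mathcal{F}^{\mathcal{A}}_{A,B}$ that happens to be closed under multiplication is automatically maximal, whatever $\operatorname{Ker}A\cap\operatorname{Ker}B$ is.

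What can actually be proved is the one-way implication (trivial common kernel implies maximal). Your $X^2\neq 0$ observation survives as a partial converse: in that situation $\mathcal{F}^{\mathcal{A}}_{A,B}$ is not even an algebra.
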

	
The below result from \cite{Khan} shows that when a commutative subalgebra of block Toeplitz matrices contains an element with an invertible off diagonal block, it must be contained in some \(\mathcal{F}_{A,B}^{\mathcal{A}}\).
	
	\begin{thm}
		\label{thm:contain}
		Let \(\mathcal{A}\) be a maximal commutative subalgebra of \(M_d \otimes \mathbb{C}\) and let \(\mathcal{C}\subseteq\mathcal{T}_{n}\otimes \mathcal{A}\) be a commutative subalgebra. If \(\mathcal{C}\) contains an element \(\mathbf{T}\) such that for some \(j\neq 0\) the block \(T_j\) is invertible in \(\mathcal{A}\), then there exist \(A,B\in\mathcal{A}\) satisfying \(Ker A\cap Ker B=\{0\}\) such that \(\mathcal{C}\subseteq\mathcal{F}_{A,B}^{\mathcal{A}}\).
	\end{thm}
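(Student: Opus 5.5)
We prove Theorem~\ref{thm:contain} by taking the element \(\mathbf{T}\in\mathcal{C}\) with an invertible off-diagonal block \(T_j\) (\(j\neq 0\)) and extracting from it the pair \(A,B\). By symmetry we treat \(1\le j\le n-1\); for \(j<0\) the same argument works with the roles of positive and negative indices swapped.

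\textbf{Choice of \(A,B\).} Set
\[
A=T_{j-n},\qquad B=T_j .
\]
Since \(T_j\) is invertible, \(\operatorname{Ker}B=\{0\}\). Hence \(\operatorname{Ker}A\cap\operatorname{Ker}B=\{0\}\) holds automatically, and both \(A\) and \(B\) lie in \(\mathcal{A}\subseteq\mathcal{A}'\).

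\textbf{Step 1: the defining relation for an arbitrary \(\mathbf{U}\in\mathcal{C}\).} Because \(\mathcal{C}\) is an algebra of block Toeplitz matrices, the products \(\mathbf{T}\mathbf{U}\) and \(\mathbf{U}\mathbf{T}\) are block Toeplitz. Apply Lemma~\ref{lem:product} to \(\mathbf{T}\mathbf{U}\) with \(p=j\) and \(q=k\), for each \(k=1,\dots,n-1\). This gives
\[
T_jU_{k-n}=T_{j-n}U_k,\qquad\text{that is,}\qquad AU_k=BU_{k-n}.
\]
This is exactly the condition defining \(\mathcal{F}_{A,B}^{\mathcal{A}}\), with the indices read as \(AT_k=BT_{k-n}\). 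Entries of \(\mathcal{A}\) commute, so the order of the factors causes no trouble. Hence \(\mathbf{U}\in\mathcal{F}_{A,B}^{\mathcal{A}}\), and therefore \(\mathcal{C}\subseteq\mathcal{F}_{A,B}^{\mathcal{A}}\).

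\textbf{Main obstacle: matching the index convention.} The delicate point is making sure the relation from Lemma~\ref{lem:product} lands in the definition's orientation \(AT_k=BT_{k-n}\), not the reversed one \(AT_{k-n}=BT_k\). If it comes out reversed, either swap which of \(T_j,T_{j-n}\) is called \(A\) and which \(B\), or use \(\mathbf{U}\mathbf{T}\) instead of \(\mathbf{T}\mathbf{U}\). Both products are Toeplitz, and commutativity of \(\mathcal{A}\) lets us reorder the factors. In either case one member of the pair is the invertible \(T_j\), so the kernel condition survives any such swap.
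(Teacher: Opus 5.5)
Your argument is correct. Taking $p=j$ in Lemma~\ref{lem:product} for the product $\mathbf{T}\mathbf{U}$ gives $T_jU_{k-n}=T_{j-n}U_k$. With $A=T_{j-n}$ and $B=T_j$ this is literally $AU_k=BU_{k-n}$, so no reordering of factors is needed and your ``main obstacle'' paragraph can be dropped.

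The case $j<0$ needs no symmetry argument. Apply the lemma with $p=j+n\in\{1,\dots,n-1\}$ and set $A=T_j$, which is now the invertible member, and $B=T_{j+n}$. In both cases the pair is $(T_{p-n},T_p)$ for the unique $p\in\{1,\dots,n-1\}$ with $p\equiv j \pmod n$.

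The paper does not prove this statement; it quotes it from \cite{Khan}, so there is no internal proof to compare your route against. Your direct derivation from Lemma~\ref{lem:product} is the natural one. It also shows the statement holds under weaker hypotheses than stated. The argument uses only that $\mathcal{C}$ is closed under multiplication, so that $\mathbf{T}\mathbf{U}\in\mathcal{C}$ is block Toeplitz. It also uses that $\mathcal{A}$ is commutative, so that $A,B\in\mathcal{A}\subseteq\mathcal{A}'$. Neither commutativity of $\mathcal{C}$ nor maximality of $\mathcal{A}$ is used.
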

	
		\section{The special algebra \(\mathcal{B}\otimes \mathcal{O}_{\sigma, \tau}\)}
	Now we specialize to the Schur algebra \(\mathcal{A}=\mathcal{O}_{\sigma,\tau}\). Recall that its radical \(\mathcal{R}\) consists of strictly upper block triangular matrices with zero scalar part. Define
	\[
	\mathcal{B}\otimes \mathcal{O}_{\sigma, \tau} = \bigl\{ \mathbf{T}=(T_{p-q})\in\mathcal{T}_{n} \otimes \mathcal{O}_{\sigma, \tau} : T_j\in\mathcal{R} \text{ for all } j\neq 0 \bigr\}.
	\]
	Thus $\mathcal{B}\otimes \mathcal{O}_{\sigma, \tau}$ consists of block Toeplitz matrices whose off diagonal blocks are nilpotent (i.e., have \(\lambda=0\)); the main diagonal blocks are arbitrary elements of \(\mathcal{O}_{\sigma, \tau}\). 
	
	The next Theorem shows that $\mathcal{B}\otimes \mathcal{O}_{\sigma, \tau}$ is a maximal commutative algebra and can be realized as \(\mathcal{F}_{A,B}\) for any linearly independent nilpotent matrices.
	
	\begin{thm} \label{thm:S}
		The set $\mathcal{B}\otimes \mathcal{O}_{\sigma, \tau}$ is a maximal commutative subalgebra of \(\mathcal{T}_{n}\otimes \mathcal{O}_{\sigma, \tau}\). Moreover, for any two linearly independent matrices \(A,B\in\mathcal{R}\), we have \(\mathcal{B}\otimes \mathcal{O}_{\sigma, \tau}=\mathcal{F}_{A,B}\). In particular, \(\mathcal{F}_{A,B}\) is an algebra even though \( Ker A\cap Ker B\neq\{0\}\).
	\end{thm}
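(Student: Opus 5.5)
The whole argument rests on two facts about $\mathcal{O}_{\sigma,\tau}$: it is commutative, and its radical satisfies $\mathcal{R}^2=0$. I will also use that $\lambda:\mathcal{O}_{\sigma,\tau}\to\mathbb{C}$, the scalar part, is an algebra homomorphism with kernel $\mathcal{R}$. The plan is to prove closure and commutativity, then maximality, then the identification with $\mathcal{F}_{A,B}$.

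\textbf{Closure and commutativity.} For $\mathbf{T}\in\mathcal{B}\otimes\mathcal{O}_{\sigma,\tau}$ I write $\mathbf{T}=\mathbf{D}_T+\mathbf{N}_T$. Here $\mathbf{D}_T$ is the block diagonal matrix with every diagonal block equal to $T_0$, and $\mathbf{N}_T$ is the off diagonal Toeplitz part, whose blocks lie in $\mathcal{R}$.

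\begin{itemize}
\item Every block of $\mathbf{N}_T\mathbf{N}_U$ is a sum of products of two elements of $\mathcal{R}$, so $\mathbf{N}_T\mathbf{N}_U=0$.
\item Since $\mathcal{O}_{\sigma,\tau}$ is commutative, $\mathbf{D}_T$ commutes with every matrix in $\mathcal{M}_n\otimes\mathcal{O}_{\sigma,\tau}$.
\end{itemize}

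Therefore
\[
\mathbf{T}\mathbf{U}=\mathbf{D}_T\mathbf{D}_U+\mathbf{D}_T\mathbf{N}_U+\mathbf{N}_T\mathbf{D}_U .
\]
This expression is symmetric in $\mathbf{T},\mathbf{U}$, so $\mathbf{T}\mathbf{U}=\mathbf{U}\mathbf{T}$. It is also block Toeplitz: multiplying a Toeplitz matrix by a constant block diagonal matrix keeps it Toeplitz. Its diagonal block is $T_0U_0$. Its off diagonal blocks are $T_0U_j+T_jU_0$, which lie in $\mathcal{R}$ because $\mathcal{R}$ is an ideal. Hence $\mathcal{B}\otimes\mathcal{O}_{\sigma,\tau}$ is a commutative algebra. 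Alternatively, Lemma \ref{lem:product} holds trivially here, since both sides of the criterion are products of two radical elements.

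\textbf{Maximality.} Let $\mathcal{C}\supseteq\mathcal{B}\otimes\mathcal{O}_{\sigma,\tau}$ be a commutative subalgebra of $\mathcal{T}_n\otimes\mathcal{O}_{\sigma,\tau}$. It suffices to show that any $\mathbf{U}=(U_{p-q})\in\mathcal{T}_n\otimes\mathcal{O}_{\sigma,\tau}$ commuting with all of $\mathcal{B}\otimes\mathcal{O}_{\sigma,\tau}$ has $\lambda(U_j)=0$ for $j\neq0$.

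Fix $0\neq R\in\mathcal{R}$ and take $\mathbf{T}$ with $T_1=R$ and all other blocks zero. A direct computation gives
\[
(\mathbf{T}\mathbf{U})_{p,q}=RU_{p-1-q}\ (p\ge1),\quad 0\ (p=0);\qquad (\mathbf{U}\mathbf{T})_{p,q}=U_{p-q-1}R\ (q\le n-2),\quad 0\ (q=n-1).
\]
For any $X\in\mathcal{O}_{\sigma,\tau}$ we have $XR=RX=\lambda(X)R$, because $\mathcal{R}^2=0$.

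\begin{itemize}
\item Row $p=0$ with $q\le n-2$: the equation $(\mathbf{T}\mathbf{U})_{0,q}=(\mathbf{U}\mathbf{T})_{0,q}$ reads $0=\lambda(U_{-q-1})R$. This forces $\lambda(U_{-j})=0$ for $j=1,\dots,n-1$.
\item Column $q=n-1$ with $p\ge1$: the equation reads $\lambda(U_{p-n})R=0$. Reading the index as $p-1-q=p-n$, this again gives the negative indices.
\end{itemize}

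For the positive indices I will either take $T_{-1}=R$ instead, or use the symmetric row $p=n-1$ and column $q=0$. This is the only place needing care: I must be sure the boundary rows and columns cover every index $j\neq0$, and I would check this by doing both shifts $T_{\pm1}=R$. The conclusion is $\mathbf{U}\in\mathcal{B}\otimes\mathcal{O}_{\sigma,\tau}$, so $\mathcal{C}=\mathcal{B}\otimes\mathcal{O}_{\sigma,\tau}$.

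\textbf{Identification with $\mathcal{F}_{A,B}$.} Since $\mathcal{O}_{\sigma,\tau}$ is maximal commutative, $\mathcal{O}_{\sigma,\tau}'=\mathcal{O}_{\sigma,\tau}$, so $A,B\in\mathcal{R}$ are admissible parameters. For $T_j\in\mathcal{O}_{\sigma,\tau}$ we have $AT_j=\lambda(T_j)A$, so the defining relation of $\mathcal{F}_{A,B}$ becomes
\[
\lambda(T_j)A=\lambda(T_{j-n})B,\qquad j=1,\dots,n-1 .
\]

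\begin{itemize}
\item If $\mathbf{T}\in\mathcal{B}\otimes\mathcal{O}_{\sigma,\tau}$, both sides vanish, so $\mathbf{T}\in\mathcal{F}_{A,B}$.
\item Conversely, if $\mathbf{T}\in\mathcal{F}_{A,B}$, linear independence of $A,B$ forces $\lambda(T_j)=\lambda(T_{j-n})=0$ for $j=1,\dots,n-1$. These indices exhaust all off diagonal positions, so $\mathbf{T}\in\mathcal{B}\otimes\mathcal{O}_{\sigma,\tau}$.
\end{itemize}

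Hence $\mathcal{F}_{A,B}=\mathcal{B}\otimes\mathcal{O}_{\sigma,\tau}$, which is an algebra by the first part. Finally, $A$ and $B$ are non-invertible, so by Lemma \ref{lem:invert} both kernels contain $\mathbb{C}^\sigma\oplus\{0\}$. Thus $Ker A\cap Ker B\neq\{0\}$, which gives the last assertion of the theorem.
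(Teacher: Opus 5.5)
Your proof is correct and has the same three-part outline as the paper's. Closure comes from $\mathcal{R}^2=0$ and the ideal property. Maximality comes from testing $\mathbf{U}$ against a single-diagonal element with a radical block. The identification with $\mathcal{F}_{A,B}$ comes from $\lambda(T_j)A=\lambda(T_{j-n})B$ and linear independence. The differences are in the mechanics, and they favour your version:

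\begin{itemize}
\item \textbf{Closure.} The paper gets Toeplitzness of $\mathbf{T}\mathbf{U}$ from Lemma~\ref{lem:product} and then argues blockwise that the off-diagonal blocks lie in $\mathcal{R}$. It never actually checks $\mathbf{T}\mathbf{U}=\mathbf{U}\mathbf{T}$. Your splitting $\mathbf{T}=\mathbf{D}_T+\mathbf{N}_T$ gives the explicit product and commutativity at once.
\item \textbf{Maximality.} The paper uses the corner element $T_{1-n}=N$ (and then $T_1=N$). It asserts that $\mathbf{T}\mathbf{U}$ is block Toeplitz ``because $\mathbf{T}$ has only one nonzero diagonal'' and reads off $NU_r=0$ from the $p=1$ case of Lemma~\ref{lem:product}. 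That assertion is not automatic: for $n=2$, $\mathbf{T}\mathbf{U}$ is Toeplitz exactly when $NU_1=0$, which is what is being proved. It needs the extra input that $\mathbf{T}\mathbf{U}$ lies in a commutative subalgebra of $\mathcal{T}_n\otimes\mathcal{O}_{\sigma,\tau}$. Your direct comparison of boundary entries of $\mathbf{T}\mathbf{U}$ and $\mathbf{U}\mathbf{T}$ uses only commutation. It therefore proves the stronger fact that $\mathcal{B}\otimes\mathcal{O}_{\sigma,\tau}$ is its own commutant inside $\mathcal{T}_n\otimes\mathcal{O}_{\sigma,\tau}$.
\item \textbf{Final claim.} You also verify $\ker A\cap\ker B\neq\{0\}$ via Lemma~\ref{lem:invert}, which the paper leaves implicit.
\end{itemize}

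One point to tighten: the ``row $p=n-1$ and column $q=0$'' alternative only works together with $T_{-1}=R$. With $T_1=R$, every entry equation with $p\ge1$ and $q\le n-2$ holds trivially, since both sides equal $\lambda(U_{p-q-1})R$. Row $n-1$ and column $0$ then only return $\lambda(U_{-1})=0$ again. The positive indices come from $T_{-1}=R$: there $(\mathbf{T}\mathbf{U})_{n-1,q}=0$ while $(\mathbf{U}\mathbf{T})_{n-1,q}=\lambda(U_{n-q})R$ for $q=1,\dots,n-1$. So commit to that second shift rather than offering the alternative.
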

	
	\begin{proof}
		We first show that \(\mathcal{B}\otimes \mathcal{O}_{\sigma, \tau}\) is an algebra. Obviously  \(\mathcal{B}\otimes \mathcal{O}_{\sigma, \tau}\) is a linear subspace inside $\mathcal{T}_n\otimes \mathcal{O}_{\sigma, \tau}$. For closeness,  take any $\mathbf{T}=(T_{p-q})\), \(\mathbf{U}=(U_{p-q})$ in $\mathcal{B}\otimes \mathcal{O}_{\sigma, \tau}$. For any \(p,q=1,\cdots,n-1\), we have \(T_p, U_{q-n}, T_{p-n}, U_q\in\mathcal{R}\). Since \(\mathcal{R}^2=0\), it follows that
		\[
		T_p U_{q-n}=0,\qquad T_{p-n} U_q=0.
		\]
		Thus condition (3.1) of Lemma \ref{lem:product} holds, so \(\mathbf{T}\mathbf{U}\) is block Toeplitz. Moreover, for any off diagonal position \((i,j)\) with \(i\neq j\), the block \((\mathbf{T}\mathbf{U})_{i,j}\) is a sum of terms of the form \(T_{i-\ell} U_{\ell-j}\). In each such term, at least one of the indices \(i-\ell\) or \(\ell-j\) is nonzero, so the corresponding factor lies in \(\mathcal{R}\). Because \(\mathcal{R}\) is an ideal of \(\mathcal{O}_{\sigma, \tau}\), each term belongs to \(\mathcal{R}\). Hence \((\mathbf{T}\mathbf{U})_{i,j}\in\mathcal{R}\) for \(i\neq j\), and therefore $\mathbf{T}\in\mathcal{B}\otimes \mathcal{O}_{\sigma, \tau}$. This proves closure under multiplication.
		
		We now establish the maximality of $\mathcal{B}\otimes \mathcal{O}_{\sigma, \tau}$. To do this end, let $\mathbf{U}=(U_{p-q})\in\mathcal{T}_{n}\otimes \mathcal{O}_{\sigma, \tau}$ be such that \(\mathbf{U}\) commutes with every element of $\mathcal{B}\otimes \mathcal{O}_{\sigma, \tau}$. Writing each block in the canonical form
		\[
		U_r = \begin{pmatrix} \lambda_r I_\sigma & X_r \\ 0 & \lambda_r I_\tau \end{pmatrix}, \qquad \lambda_r\in\mathbb{C},\; X_r\in M_{\sigma\times\tau}\otimes \mathbb{C}.
		\]
		Fix an index $r$ with $1\le r\le n-1$. Choose a nonzero nilpotent matrix \(N\in\mathcal{R}\) of the simple form
		\[
		N = \begin{pmatrix} 0 & I_{\sigma\times\tau} \\ 0 & 0 \end{pmatrix},
		\]
		where \(I_{\sigma\times\tau}\) denotes any fixed nonzero matrix (for instance, the matrix with a single 1 in the \((1,1)\) position if \(\sigma,\tau\ge1\)). Define a matrix $\mathbf{T}\in\mathcal{B}\otimes \mathcal{O}_{\sigma, \tau}$ by setting \(T_{1-n}=N\) and \(T_j=0\) for all \(j\neq 1-n\)). Since \(\mathbf{U}\) commutes with \(\mathbf{T}\), we have \(\mathbf{T}\mathbf{U}=\mathbf{U}\mathbf{T}\). Because \(\mathbf{T}\) has only one nonzero diagonal, the product \(\mathbf{T}\mathbf{U}\) is block Toeplitz; consequently, by Lemma \ref{lem:product} applied to the pair \((\mathbf{T},\mathbf{U})\), condition (3.1) must hold. In particular, for \(p=1\) and \(q=r\) we obtain
		\[
		T_1 U_{r-n} = T_{1-n} U_r.
		\]
		But \(T_1=0\) and \(T_{1-n}=N\), so this becomes
		\[
		0 = N U_r.
		\]
		A direct computation gives
		\[
		N U_r = \begin{pmatrix} 0 & I \\ 0 & 0 \end{pmatrix} \begin{pmatrix} \lambda_r I & X_r \\ 0 & \lambda_r I \end{pmatrix} = \begin{pmatrix} 0 & \lambda_r I \\ 0 & 0 \end{pmatrix} = \lambda_r N.
		\]
		Since \(N\neq0\), we deduce \(\lambda_r=0\). A similar argument, using a matrix \(\mathbf{T}\) with \(T_1=N\) and all other blocks zero, shows that \(\lambda_{-r}=0\) for \(r\ge1\). Thus all off diagonal blocks of \(\mathbf{U}\) have zero scalar part, that is,  lie in \(\mathcal{R}\). Hence $\mathbf{U}\in\mathcal{B}\otimes \mathcal{O}_{\sigma, \tau}$. This proves that $\mathcal{B}\otimes \mathcal{O}_{\sigma, \tau}$ is maximal commutative.
		
		 We now show that for linearly independent matrices \(A,B\in\mathcal{R}\) we have  $\mathcal{B}\otimes \mathcal{O}_{\sigma, \tau}=\mathcal{F}_{A,B}$.  Let \(A,B\in\mathcal{R}\) be linearly independent. Write them as
		\[
		A = \begin{pmatrix} 0 & A' \\ 0 & 0 \end{pmatrix},\qquad B = \begin{pmatrix} 0 & B' \\ 0 & 0 \end{pmatrix}
		\]
		with \(A',B'\in M_{\sigma\times\tau}\otimes \mathbb{C}\) linearly independent. For any \(\mathbf{T}\in\mathcal{B}\otimes \mathcal{O}_{\sigma, \tau}\), since $\mathcal{R}^2=0$, we have \(T_j\in\mathcal{R}\) for all \(j\neq0\), so \(A T_j = 0\) and \(B T_{j-n}=0\). Hence \(\mathbf{T}\in\mathcal{F}_{A,B}\). Thus \(\mathcal{B}\otimes \mathcal{O}_{\sigma, \tau}\subseteq\mathcal{F}_{A,B}\).
		
		Conversely, let \(\mathbf{T}\in\mathcal{F}_{A,B}\). Write \(T_j = \begin{pmatrix} \lambda_j I\sigma & X_j \\ 0 & \lambda_j I_\tau \end{pmatrix}\). The defining relation \(A T_j = B T_{j-n}\) for \(j=1,\cdots,n-1\) yields
		\[
		\begin{pmatrix} 0 & A' \\ 0 & 0 \end{pmatrix} \begin{pmatrix} \lambda_j I & X_j \\ 0 & \lambda_j I \end{pmatrix} = \begin{pmatrix} 0 & B' \\ 0 & 0 \end{pmatrix} \begin{pmatrix} \lambda_{j-n} I & X_{j-n} \\ 0 & \lambda_{j-n} I \end{pmatrix}.
		\]
		Computing the left hand hand side gives \(\begin{pmatrix} 0 & \lambda_j A' \\ 0 & 0 \end{pmatrix}\); the right-hand side gives \(\begin{pmatrix} 0 & \lambda_{j-n} B' \\ 0 & 0 \end{pmatrix}\). Hence \(\lambda_j A' = \lambda_{j-n} B'\). Because \(A'\) and \(B'\) are linearly independent, we must have \(\lambda_j = \lambda_{j-n} = 0\) for every \(j=1,\cdots,n-1\). Therefore all off diagonal blocks of \(\mathbf{T}\) lie in \(\mathcal{R}\), that is, \(\mathbf{T}\in\mathcal{B}\otimes \mathcal{O}_{\sigma, \tau}\). So \(\mathcal{F}_{A,B}\subseteq \mathcal{B}\otimes \mathcal{O}_{\sigma, \tau}\). 
	\end{proof}
	
	\section{Classification theorem}
	This section is the most important section of this paper as it deals with classification of all the maximal commutative algebras inside $\mathcal{T}_n\otimes\mathcal{O}_{\sigma, \tau}$. 
	
	\begin{thm}\label{thm:main}
		Every maximal commutative subalgebra \(\mathcal{M}\) of \(\mathcal{T}_{n}\otimes \mathcal{O}_{\sigma, \tau}\) is either
		\begin{itemize}
			\item[(i)] \(\mathcal{F}_{A,B}\) for some \(A,B\in\mathcal{O}_{\sigma, \tau}\) with at least one of them invertible, or
			\item[(ii)] the algebra $\mathcal{B}\otimes \mathcal{O}_{\sigma, \tau}$ (which coincides with \(\mathcal{F}_{A,B}\) for any pair of linearly independent nilpotent matrices \(A,B\in\mathcal{R}\)).
		\end{itemize}
		Conversely, every algebra of these two types is maximal commutative.
	\end{thm}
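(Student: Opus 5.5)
The plan is a case split according to whether \(\mathcal{M}\) contains an element with an invertible off diagonal block; the converse direction is handled separately. Let \(\mathcal{M}\) be a maximal commutative subalgebra of \(\mathcal{T}_{n}\otimes \mathcal{O}_{\sigma,\tau}\).

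\emph{Case 1.} Some \(\mathbf{T}\in\mathcal{M}\) has an invertible block \(T_j\) with \(j\neq 0\). Since \(\mathcal{O}_{\sigma,\tau}\) is maximal commutative, Theorem \ref{thm:contain} applies. It gives \(A,B\in\mathcal{O}_{\sigma,\tau}\) with \(Ker A\cap Ker B=\{0\}\) and \(\mathcal{M}\subseteq\mathcal{F}_{A,B}\). By Theorem \ref{thm:alg} and Theorem \ref{thm:max}, \(\mathcal{F}_{A,B}\) is a maximal commutative algebra, so maximality of \(\mathcal{M}\) forces \(\mathcal{M}=\mathcal{F}_{A,B}\).

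It remains to see that one of \(A,B\) is invertible. Suppose neither is. By Lemma \ref{lem:invert}, both kernels contain \(\mathbb{C}^\sigma\oplus\{0\}\), so \(Ker A\cap Ker B\neq\{0\}\), a contradiction. This gives type (i).

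\emph{Case 2.} Every element of \(\mathcal{M}\) has all off diagonal blocks non-invertible. By Lemma \ref{lem:invert}, this means every off diagonal block has zero scalar part, i.e.\ lies in \(\mathcal{R}\). Hence \(\mathcal{M}\subseteq\mathcal{B}\otimes\mathcal{O}_{\sigma,\tau}\). By Theorem \ref{thm:S} the latter is a commutative algebra, and it is maximal, so \(\mathcal{M}=\mathcal{B}\otimes\mathcal{O}_{\sigma,\tau}\). The identification of this algebra with \(\mathcal{F}_{A,B}\) for linearly independent \(A,B\in\mathcal{R}\) is also part of Theorem \ref{thm:S}. This gives type (ii).

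\emph{Converse.} Type (ii) is maximal commutative by Theorem \ref{thm:S}. For type (i), suppose \(A\) is invertible (the case of \(B\) is symmetric). Then \(Ker A=\{0\}\), so the kernel condition holds trivially. Theorems \ref{thm:alg} and \ref{thm:max} then give that \(\mathcal{F}_{A,B}\) is a maximal commutative algebra. Here \(A,B\in\mathcal{O}_{\sigma,\tau}\subseteq\mathcal{O}_{\sigma,\tau}'\), since the algebra is commutative.

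\emph{Disjointness.} The two types are distinct. Let \(A\) be invertible and consider the element of \(\mathcal{F}_{A,B}\) given by \(T_0=I\), \(T_j=A^{-1}BS\) and \(T_{j-n}=S\) with \(S=I\). It has an off diagonal block \(T_{j-n}=I\), which is invertible, so it does not lie in \(\mathcal{B}\otimes\mathcal{O}_{\sigma,\tau}\).

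\emph{Main obstacle.} The delicate point is the use of Theorem \ref{thm:max}. It is stated for maximality in \(\mathcal{T}_n\otimes\mathbb{C}\), but here we need maximality among commutative subalgebras of \(\mathcal{T}_n\otimes\mathcal{O}_{\sigma,\tau}\). Any commutative algebra in the smaller ambient space is also one in the larger, so maximality in the larger space should descend to the smaller one. If the statement is read literally with scalar entries, I would instead re-derive the commutant argument inside \(\mathcal{T}_n\otimes\mathcal{A}\). The argument would follow the pattern of the maximality proof in Theorem \ref{thm:S}: test against single-diagonal elements of \(\mathcal{F}_{A,B}\) and use the invertibility of \(A\) or \(B\) to propagate the relation \(AU_j=BU_{j-n}\).
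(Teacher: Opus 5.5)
Your proposal is correct and follows essentially the same route as the paper. It uses the same case split according to whether \(\mathcal{M}\) contains an element with an invertible off-diagonal block, then Theorem~\ref{thm:contain} with Lemma~\ref{lem:invert} in the first case, Theorem~\ref{thm:S} in the second, and Theorems~\ref{thm:alg}, \ref{thm:max} and \ref{thm:S} for the converse. It is slightly cleaner in two places: you drop the paper's unnecessary and unjustified reduction to \(T_1\) invertible by passing to a power (Theorem~\ref{thm:contain} already allows any \(j\neq 0\)), and your explicit element of \(\mathcal{F}_{A,B}\) with \(T_{j-n}=I\) makes the disjointness argument concrete.
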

	
	\begin{proof}
		We first prove that each type is maximal
		
 If \(A,B\in\mathcal{O}_{\sigma, \tau}\) satisfy \( Ker A\cap Ker B=\{0\}\), then by Lemma~\ref{lem:invert} this condition is equivalent to at least one of \(A,B\) being invertible. Applying Theorem~\ref{thm:max} with \(\mathcal{A}=\mathcal{O}_{\sigma, \tau}\) we conclude that \(\mathcal{F}_{A,B}\) is a maximal commutative subalgebra of \(\mathcal{T}_{n}\otimes \mathcal{O}_{\sigma, \tau}\).
 
For any two linearly independent \(A,B\in\mathcal{R}\), Theorem~\ref{thm:S} tells us that \(\mathcal{B}\otimes\mathcal{O}_{\sigma, \tau}=\mathcal{F}_{A,B}\) and that \(\mathcal{B}\otimes\mathcal{O}_{\sigma, \tau}\) is maximal commutative.
Thus every algebra of type (i) or (ii) is indeed maximal.
		
		Now  let us assume that $\mathcal{M}\subseteq\mathcal{T}_{n}\otimes \mathcal{O}_{\sigma, \tau}$ be a maximal commutative subalgebra. We distinguish two cases.
		
		\medskip
		\noindent\text{Case 1.} \(\mathcal{M}\) contains an element \(\mathbf{T}\) such that for some index \(j\neq 0\) the block \(T_j\) is invertible in \(\mathcal{O}_{\sigma, \tau}\). By replacing \(\mathbf{T}\) by a suitable power we may assume without loss of generality that \(T_1\) is invertible. Applying Theorem~\ref{thm:contain} with \(\mathcal{A}=\mathcal{O}_{\sigma, \tau}\), we obtain matrices \(A,B\in\mathcal{O}_{\sigma, \tau}\) with \( Ker A\cap Ker B=\{0\}\) such that \(\mathcal{M}\subseteq\mathcal{F}_{A,B}\). Since \(\mathcal{F}_{A,B}\) is itself a commutative algebra (Theorem~\ref{thm:alg}) and \(\mathcal{M}\) is maximal, the inclusion forces \(\mathcal{M}=\mathcal{F}_{A,B}\). By Lemma~\ref{lem:invert}, the condition \( Ker A\cap Ker B=\{0\}\) is equivalent to at least one of \(A,B\) is invertible. Hence \(\mathcal{M}\) is of type (i).
		
		\medskip
		\noindent\text{Case 2.} No element of \(\mathcal{M}\) has an invertible off diagonal block. That is, for every \(\mathbf{T}\in\mathcal{M}\) and every \(j\neq 0\), the block \(T_j\) is not invertible. By Lemma~\ref{lem:invert}, this means that the scalar part of \(T_j\) is zero, i.e. \(T_j\in\mathcal{R}\). Consequently \(\mathcal{M}\subseteq\mathcal{B}\otimes\mathcal{O}_{\sigma, \tau}\). But \(\mathcal{B}\otimes\mathcal{O}_{\sigma, \tau}\) is a maximal commutative algebra (Theorem~\ref{thm:S}), so the inclusion \(\mathcal{M}\subseteq \mathcal{B}\otimes\mathcal{O}_{\sigma, \tau}\) together with maximality of \(\mathcal{M}\) yields \(\mathcal{M}=\mathcal{B}\otimes\mathcal{O}_{\sigma, \tau}\). Thus \(\mathcal{M}\) is of type (ii).
		
		 These two families are disjoint: any \(\mathcal{F}_{A,B}\) with an invertible off‑diagonal block (e.g. the generator itself) cannot lie in \(\mathcal{B}\otimes\mathcal{O}_{\sigma, \tau}\) because all off‑diagonal entries of \(\mathcal{B}\otimes\mathcal{O}_{\sigma, \tau}\) are nilpotent. Conversely, \(\mathcal{B}\otimes\mathcal{O}_{\sigma, \tau}\) contains no element with an invertible off‑diagonal block, so it cannot be of the first type. 
	\end{proof}
	
		\begin{rmk}
		The parameterization in case (ii) is not unique: any linearly independent pair $A,B\in\mathcal{R}$ yields the same algebra \(\mathcal{B}\otimes\mathcal{O}_{\sigma, \tau}\). In contrast, for case (i) different pairs \((A,B)\) may give distinct algebras; however, they are all maximal. Note that two pairs $(A, B)$ and $(A^\prime, B^\prime)$ satisfying the nondegeneracy condition $(Ker A\cap Ker B=\{0\}$ and $Ker A^\prime\cap Ker B^\prime=\{0\})$ give the same algebra $\mathcal{F}_{A,B}=\mathcal{F}_{A^\prime, B^\prime}$ if and only if $AB^\prime=A^\prime B$. This follows from Theorem~5.1 (ii) of \cite{KhanTimotin}; a short proof in our case is obtained by observing that the element $\mathbf{T}$ with $T_1=B$ and $T_{1-n}=A$ is in $\mathcal{F}_{A,B}=\mathcal{F}_{A^\prime, B^\prime}$ yields $AB^\prime=A^\prime B$. The parametrization is therefore unique  upto the equivalence relation $(A, B)\sim (A^\prime, B^\prime)$ defined by $AB^\prime=A^\prime B$.
	\end{rmk}
	We support our main result by constructing explicit examples of maximal commutative subalgebras of block Toeplitz matrices with entries in a Schur algebra.  
	Take \(\sigma = 2\), \(\tau = 1\), so \(d = 3\). The Schur algebra \(\mathcal{O}_{2,1}\) consists of all \(3 \times 3\) matrices of the form  
	\[
	\begin{pmatrix}
		\lambda I_2 & X \\
		0 & \lambda
	\end{pmatrix},
	\quad \lambda \in \mathbb{C},\; X \in M_{2\times 1}\otimes \mathbb{C}.
	\]  
	Its radical \(\mathcal{R}\) is the set of matrices with \(\lambda = 0\), i.e.,  
	\[
	\mathcal{R} = \left\{ \begin{pmatrix} 0 & x_1 \\ 0 & x_2 \\ 0 & 0 \end{pmatrix} : x_1, x_2 \in \mathbb{C} \right\}
	\]  
	(where the column \(X = (x_1, x_2)^T\)).  
	
	Let \(n = 3\). Then \(\mathcal{T}_{3}\otimes \mathcal{O}_{2,1}\) consists of \(3 \times 3\) block Toeplitz matrices whose blocks are in \(\mathcal{O}_{2,1}\).
	
	\subsection*{Example 1: Type (i) – one generator invertible}
	Choose \(A = I_3\) (invertible) and \(B = 0\). Then \(\mathcal{F}_{I,0}\) is defined by  
	\[
	I \cdot T_j = 0 \cdot T_{j-3} \quad\Longrightarrow\quad T_j = 0 \text{ for } j = 1,2.
	\]  
	Thus every matrix in \(\mathcal{F}_{I,0}\) has the form  
	\[
	\mathbf{T} = \begin{pmatrix}
		T_0 & 0 & 0 \\
		0 & T_0 & 0 \\
		0 & 0 & T_0
	\end{pmatrix},
	\]  
	where \(T_0 \in \mathcal{O}_{2,1}\) is arbitrary. This algebra is isomorphic to \(\mathcal{O}_{2,1}\) and is maximal commutative.
	
	\subsection*{Example 2: Type (i) – both invertible}
	Take \(A = I_3\), \(B = \mu I_3\) with \(\mu \neq 0\). Then the defining relation is  
	\[
	T_j = \mu T_{j-3} \quad (j=1,2).
	\]  
	All blocks are determined by \(T_0, T_1, T_2\) (with \(T_{-1} = \mu T_2\), \(T_{-2} = \mu T_1\)). Thus  
	\[
	\mathbf{T} = \begin{pmatrix}
		T_0 & T_1 & T_2 \\
		\mu T_2 & T_0 & T_1 \\
		\mu T_1 & \mu T_2 & T_0
	\end{pmatrix},
	\]  
	with \(T_0, T_1, T_2 \in \mathcal{O}_{2,1}\). This is a 3‑parameter family, maximal commutative.
	
	\subsection*{Example 3: Type (ii) – the special algebra \(\mathcal{B}\otimes\mathcal{O}_{\sigma, \tau}\)}
	\(\mathcal{B}\otimes\mathcal{O}_{2,1}\) consists of all block Toeplitz matrices whose off diagonal blocks lie in \(\mathcal{R}\). A generic element is  
	\[
	\mathbf{T} = \begin{pmatrix}
		D_0 & N_1 & N_2 \\
		N_{-1} & D_0 & N_1 \\
		N_{-2} & N_{-1} & D_0
	\end{pmatrix},
	\]  
	where \(D_0 \in \mathcal{O}_{2,1}\) and each \(N_j \in \mathcal{R}\). For instance, take  
	\[
	D_0 = \begin{pmatrix} \lambda & 0 & 0 \\ 0 & \lambda & 0 \\ 0 & 0 & \lambda \end{pmatrix},\quad
	N_1 = \begin{pmatrix} 0 & 0 & a \\ 0 & 0 & b \\ 0 & 0 & 0 \end{pmatrix},\quad
	N_2 = \begin{pmatrix} 0 & 0 & c \\ 0 & 0 & d \\ 0 & 0 & 0 \end{pmatrix},
	\]  
	and similarly for \(N_{-1}, N_{-2}\) (they may be chosen independently). The product of any two such matrices remains in \(\mathcal{B}\otimes\mathcal{O}_{2,1}\) because any product of nilpotents is zero, and a nilpotent times a diagonal block stays nilpotent.  
	
	Moreover, \(\mathcal{B}\otimes\mathcal{O}_{2,1}\) can also be written as \(\mathcal{F}_{A,B}\) for any two linearly independent nilpotent matrices. Choose  
	\[
	A = \begin{pmatrix} 0 & 1 & 0 \\ 0 & 0 & 0 \\ 0 & 0 & 0 \end{pmatrix},\quad
	B = \begin{pmatrix} 0 & 0 & 1 \\ 0 & 0 & 0 \\ 0 & 0 & 0 \end{pmatrix}.
	\]  
	Then the condition \(A T_j = B T_{j-3}\) forces the scalar part of \(T_j\) to be zero, so exactly the matrices in \(\mathcal{B}\otimes\mathcal{O}_{\sigma, \tau}\) satisfy it.
	\begin{rmk}
	We have obtained a complete classification of maximal commutative subalgebras of block Toeplitz matrices with entries in a Schur algebra. This adds to the list of cases where such a classification is possible. In \cite{KhanTimotin} we treated the case where the entries belong to a singly generated algebras. The general problem for an arbitrary maximal commutative subalgebra of \(M_d\otimes \mathbb{C}\) remains open and appears to be considerably more difficult. 
\end{rmk}
	
	\section*{Acknowledgments}
	The author is highly grateful to referee for his valuable
	suggestions. 

\end{document}